\numberwithin{equation}{section}
\declaretheoremstyle[
  bodyfont=\normalfont\itshape,
  headformat=\NAME\ \NUMBER\NOTE,
]{myplain}
\declaretheoremstyle[
  headformat=\NAME\ \NUMBER\NOTE,
]{mydefinition}
\newcommand{\envqed}{{\lower-0.3ex\hbox{$\triangleleft$}}}
\declaretheorem[style=myplain,numberwithin=section]{theorem}
\declaretheorem[style=mydefinition,numberlike=theorem,qed=\envqed]{definition}
\declaretheorem[style=mydefinition,numberlike=theorem,qed=\envqed]{remark}
\newcommand{\e}{\mathrm{e}}
\newcommand{\R}{\mathbb{R}}
\newcommand{\orcid}[1]{ORCID:~\href{https://orcid.org/#1}{#1}}
\newenvironment{keywords}{\par\textbf{Key words.}}{\par}
\newenvironment{AMS}{\par\textbf{AMS subject classification.}}{\par}
\title{Modeling still matters: a surprising instance of catastrophic floating point errors in mathematical biology and numerical methods for ODEs} 
\date{October 05, 2024} 
\author[1]{Cordula Reisch\thanks{\orcid{0000-0003-1442-1474}}}
\affil[1]{Institute for Partial Differential Equations, TU Braunschweig, Germany}
\author[2]{Hendrik Ranocha\thanks{\orcid{0000-0002-3456-2277}}}
\affil[2]{Applied Mathematics, University of Hamburg, Germany
  Present address: Institute of Mathematics, Johannes Gutenberg University Mainz, Germany. }
\begin{document}

\maketitle

\begin{abstract}
\noindent
  We guide the reader on a journey through mathematical modeling and 
numerical analysis, emphasizing the crucial interplay of both disciplines.
Targeting undergraduate students with basic knowledge in dynamical
systems and numerical methods for ordinary differential equations,
we explore a model from mathematical biology where numerical methods
fail badly due to catastrophic floating point errors. We analyze the
reasons for this behavior by studying the steady states of the model
and use the theory of invariants to develop an alternative model that
is suited for numerical simulations. Our story intends 
to motivate combining analytical and numerical know\-ledge, even in 
cases where the world looks fine at first sight. We have set up an
online repository containing an interactive notebook with all
numerical experiments to make this study fully reproducible and
useful for classroom teaching.

\end{abstract}

\begin{keywords}
mathematical modeling,
  steady states,
  dynamical systems,
  numerical instability,
  invariants,
  Runge-Kutta methods
\end{keywords}

\begin{AMS}
  37M05,  
  65L06,  
  65L20,  
  65P40,  
  97D40   
\end{AMS}

\section{Introduction}

Setting up a nice mathematical model for some applications, using a proper numerical scheme for solving the differential equations --- and wondering what is happening?
Oftentimes, mathematical models of ordinary differential equations (ODEs) are easily solvable numerically as long as a suitable scheme is used.
In this report, we tell a different story, which highlights the process of setting up a model and analyzing it properly for ensuring trustable and correct numerical solutions.

This article intends to take the reader on a small journey through the interplay between modeling, numerical simulation and analysis.
We will pass a biological motivation on the inheritance of genes like the inheritance of flower colors, and set up a mathematical model of ODEs that describes the proportions of different genotypes in the total population.
As a next step, we try to get a first impression of the system's behavior by running numerical simulations that are expected to show the time-dependent dynamics of the system.
The first plot twist is on the goodwill and trust concerning numerical simulation, even when the schemes are chosen wisely.
We discover the reasons for the strange simulations by changing the focus of research from numerical analysis to dynamical system analysis and analyzing the steady states of the dynamical system.
The curtains fall for the theory of invariants.
The friendly-looking model for inheritance turns out to conserve an important quantity --- the total population --- as a second integral.
Accumulating floating point errors reveal the instability of steady states.
By changing the system, we turn the total population into a first integral. The new model reacts in a stable way to floating point errors and trust in numerical simulations is restored.

Please use our story to motivate combining analytical and numerical knowledge, even in cases where no problems are expected. 
A related story focusing on step size control of
numerical ODE solvers is published by
Skufca \cite{skufca2004analysis}, which we used
as an inspiration for the title of this report.
To make this study fully reproducible, we have set up an
online repository \cite{reisch2023modelingRepro}
containing an interactive notebook with all numerical
experiments using the modern programming language Julia.

Before starting our journey, we would like to highlight the aim of this article. 
The intention is not to provide new mathematical results but to motivate students from a modeling perspective why a deeper dive into mathematical analysis is helpful and provides much insight.
Therefore, we refer to analytical results rather from a modeling perspective emphasizing the nature of the problems, and we include references for further studies in the end.
The outlook will include literature on the theory of ordinary differential equations on manifolds with local coordinates and numerical correction schemes, further modeling examples, and structure-preserving numerical methods.
During the story, the focus will be on the applied modeling perspective.

\section{Modeling the inheritance of genes}

Mathematical biologists strive for explaining life in mathematical terms by abstracting from individuals and finding general insight into complex processes. 
One big question of life is the inheritance of genes and therefore the evolution of populations. 
Here, we model how genes are passed on from a parent generation to the next generation. 
While reading, you can have in mind flowers with different possible colors like white, rose and red, 
or any example where two copies of a gene are present. 
Such an organism is called diploid. 
Different versions of a gene are called alleles.
We consider here two different alleles, either $X$ or $x$.
Possible combinations give the genotypes $XX$, $Xx$ and $xx$.
If every parent inherits one of its alleles, the genotypes of the next generation are given according to the probabilities of the alleles in the parent generation.
The formulation of generations implies a discrete time, see for example \cite[Chapter 4]{britton_essential_2003}. In this setting, we assume all processes to be time continuous with blurred generations.  
The assumption of blurred generations in continuous time is in particular relevant for longer timescales. On short timescales, these assumptions are too strong and lead to an unfeasible model. Here, we are interested in describing the genetic drift, which is the change of the allele and gene distribution in a population.

In this model, the population is divided into compartments $c_i$ according to their genotypes.
Let $c_1$ be the compartment of the genotype $XX$,
$c_2$ the compartment for the mixed genotype $Xx$,
and $c_3$ the compartment of $xx$.
The size of each compartment is time-depending.

The inheritance of the genotypes is given by Mendel's Law of Segregation in Table~\ref{tab:Mendel}.
The genotypes of the outcomes depend on the genotypes of the parents' generation: 
Every offspring inherits one allele of each parent, e.g., an offspring has the genotype $c_1$ if parents with $XX$ alleles or $Xx$ alleles mix. 
The probabilities of passing on the genotypes are given in Table~\ref{tab:Mendel}.

\begin{table}[h] 
\caption{Mendel's Law of Segregation: Probability of the genotypes of the offsprings depending on the parents genotypes $XX$, $Xx$, $xx$ with their compartments $c_1, c_2, c_3$.}
\label{tab:Mendel}
\begin{tabular}{l|lll}
            & $c_1\ [XX]$          & $c_2\ [Xx]$                     & $c_3\ [xx]$          \\ \hline
$c_1\ [XX]$ & $c_1$: 1              & $c_1$: 1/2, $c_2$: 1/2           &  $c_2$:1              \\[0.5em] 
$c_2\ [Xx]$ & $c_1$:1/2,  $c_2$:1/2 \quad \, &  $c_1$: 1/4,  $c_2$: 1/2,  $c_3$:1/4 \quad \, &  $c_2$: 1/2, $c_3$:1/2 \\[0.5em] 
$c_3\ [xx]$ &  $c_2$:1              &  $c_2$: 1/2,  $c_3$: 1/2            &  $c_3$: 1          
\end{tabular}
\end{table}
Table \ref{tab:Mendel} can be read in the example of flower colors as follows: If $XX$ is the red flower genotype, $c_1$ gives the number of red flowers; $Xx$ is the rose genotype with compartment $c_2$ and $xx$ is the white genotype with compartment $c_1$. If the parents are of genotype $XX$ and $Xx$, the offspring is red with probability 1/2 (genotype $c_1$, $XX$), and rose with probability 1/2 (genotype $c_2$, $Xx$). 

Following these mixing rules, Mendel's Law of Segregation can be transformed into ordinary differential equations, compare \cite{langemann2012multi}, for the genotype compartments $c_i$, namely
\begin{equation}
\label{eq:compartments}
\begin{aligned}
    c_1' &= \frac{g(c_\mathrm{sum})}{c_\mathrm{sum}^2} \left (c_1^2 + c_1 c_2 + \frac{1}{4} c_2^2 \right), \\
    c_2' &= \frac{g(c_\mathrm{sum})}{c_\mathrm{sum}^2} \left (\frac{1}{2} c_2^2 + c_1 c_2 + 2 c_1 c_3 + c_2 c_3\right), \\
    c_3' &= \frac{g(c_\mathrm{sum})}{c_\mathrm{sum}^2} \left (\frac{1}{4} c_2^2 + c_2 c_3 + c_3^2\right),
\end{aligned}
\end{equation}
where $g(c_\mathrm{sum})\geq 0$ is a growth function for the whole population $c_\mathrm{sum}=c_1+c_2+c_3$.

In model \eqref{eq:compartments}, all genotypes have the same growth rate.
The factor $1/c_\mathrm{sum}^2$ normalizes the equations such that the overall growth is a function of the total population $c_\mathrm{sum}$,
\begin{equation}
    c_1' +c_2' +c_3' = \frac{g(c_\mathrm{sum})}{c_\mathrm{sum}^2} \left ( c_1^2  + c_2^2 + c_3^2 + 2c_1c_2 + 2 c_1 c_3 + 2 c_2c_3 \right ) = \frac{g(c_\mathrm{sum})}{c_\mathrm{sum}^2} (c_1 +c_2 + c_3)^2,
\end{equation}
resulting in $c_\mathrm{sum}' = g(c_\mathrm{sum})$.
Other dynamics of the compartments like mutation, mortality, or fitness are not regarded here, but for example in Langemann, Richter and Vollrath \cite{langemann2012multi}, Britton \cite{britton_essential_2003}, and Allen and McAvoy\cite{allen_mathematical_2019}.
Those variations of models include for example the assumption of genotype-specific growth rates depending on the fitness which effects the assumptions in \eqref{eq:compartments}.

The equations in system \eqref{eq:compartments} can be written as well in a matrix vector formalism \cite{langemann2012multi} using $c=(c_1, c_2, c_3)^T$ and the inheritance matrices
\begin{equation}
\label{eq:matrices}
    \renewcommand*{\arraystretch}{1.2}
    W_1= \begin{pmatrix}
        1 & \frac{1}{2} & 0 \\
        \frac{1}{2} & \frac{1}{4} & 0 \\
        0 & 0 & 0
    \end{pmatrix}, \quad
     W_2= \begin{pmatrix}
        0 & \frac{1}{2} & 1 \\
        \frac{1}{2} & \frac{1}{2} & \frac{1}{2} \\
        1 & \frac{1}{2} & 0
    \end{pmatrix}, \quad
        W_3= \begin{pmatrix}
          0 & 0 & 0\\
        0 & \frac{1}{4} & \frac{1}{2} \\
        0 & \frac{1}{2} & 1
    \end{pmatrix}
\end{equation}
with $W_1 +W_2 + W_3= \mathbbm{1}_{3 \times 3}$
(the matrix in $\mathbb{R}^{3 \times 3}$ with all
entries equal to unity).
Then
\begin{equation}
\label{eq:compartmentvector}
    c_i'= \frac{g(c_\mathrm{sum})}{c_\mathrm{sum}^2} c^TW_i c
\end{equation}
gives system \eqref{eq:compartments}.

Due to the growth function $g(c_\mathrm{sum})>0$, the total population grows. 
Regarding long-term phenomena, this assumption of unlimited growth is unrealistic. Therefore, and to focus on the drift effect of the mutations, we change the modeling objective to a situation with conserved quantities.
From a biological point of view, both the absolute number of copies and proportions of genotypes are of interest.
The genotype proportions $q_i= c_i/c_\mathrm{sum}$ at a certain time $t$ can be estimated by collecting and evaluating samples.
Using the product rule and \eqref{eq:compartmentvector}, the dynamics of the proportions $q_i$ are given indirectly by
\begin{equation}
    c_i'= q_i' c_\mathrm{sum} + q_i c_\mathrm{sum}'.
\end{equation}
The change of the total population is given by the growth function, $c_\mathrm{sum}'=g(c_\mathrm{sum})$ and the change of $c_i$ is given by \eqref{eq:compartmentvector}.
Therefore, the genotype proportions $q=(q_1, q_2,q_3)^T$ follow
\begin{equation}
    q_i'= \frac{g(c_\mathrm{sum})}{c_\mathrm{sum}^3} \left ( c^TW_i c   \right) - q_i \frac{g(c_\mathrm{sum})}{c_\mathrm{sum}}
        =\frac{g(c_\mathrm{sum})}{c_\mathrm{sum}} \left ( q^TW_i q  - q_i \right).
\end{equation}
For the special choice of exponential growth $g(c_\mathrm{sum})=c_\mathrm{sum}$, this becomes
\begin{equation}
\label{eq:quantityvector}
    q_i'= f_i(q) =  q^TW_i q -q_i, \quad i \in \{1,2,3\},
\end{equation}
 or written as a system
 \begin{equation}
\label{eq:system3-original}
\begin{aligned}
    q_1' &= q_1^2 + q_1 q_2 + \frac{1}{4} q_2^2 - q_1, \\
    q_2' &= \frac{1}{2} q_2^2 + q_1 q_2 + 2 q_1 q_3 + q_2 q_3 - q_2, \\
    q_3' &= \frac{1}{4} q_2^2 + q_2 q_3 + q_3^2 - q_3.
\end{aligned}
\end{equation}
The initial conditions $q_1(0)=q_{01}$, $q_2(0)=q_{02}$ and $q_3(0)=q_{03}$ should fulfill $q_i(0) \in [0,1]$ and 
$$q_\mathrm{sum}(0)=\sum_{i=1}^3 q_i(0)=1$$ since
$q_i$ describe the proportions of the different genotypes in the whole population.
We investigate the time-dependent dynamics of the genotypes in the population.

There are basically two paths one can follow from here on.
The ``ideal'' way begins with a mathematical analysis of the model;
if everything is well-understood qualitatively and the model behaves
as expected, numerical simulations can be used with confidence to
gain a quantitative understanding of the dynamics.
Here, we follow the ``practical'' approach chosen probably in most
cases by practitioners: We directly perform numerical simulations
to get a basic understanding of the model; based on the numerical
results, we will focus our attention on interesting behavior for
a deeper mathematical analysis.

\subsection{Numerical experiments}

All numerical experiments presented in this article are available
from our reproducibility repository \cite{reisch2023modelingRepro}.
In particular, an interactive Pluto.jl \cite{pluto} notebook
based on Julia \cite{bezanson2017julia} is available for download.

Numerical simulations provide a first impression of the spread of the genotypes over time.
At first, we use the fifth-order Runge-Kutta method of Tsitouras \cite{tsitouras2011runge},
which is the recommended default method for non-stiff problems in OrdinaryDiffEq.jl
\cite{rackauckas2017differentialequations}.
As shown in Figure~\ref{fig:system3_original_Tsit5}, the numerical solution appears to
converge to a steady state --- until it suddenly goes to zero. 
While the tendency towards the steady state matches our expectation, the decay to zero contradicts
the modeling assumption $\sum_i q_i = 1$ and does not change significantly if we use stricter
tolerances --- the time of ``extinction'' may only be postponed a bit.
\begin{figure}[ht]
\centering
    \includegraphics[width=0.5\textwidth]{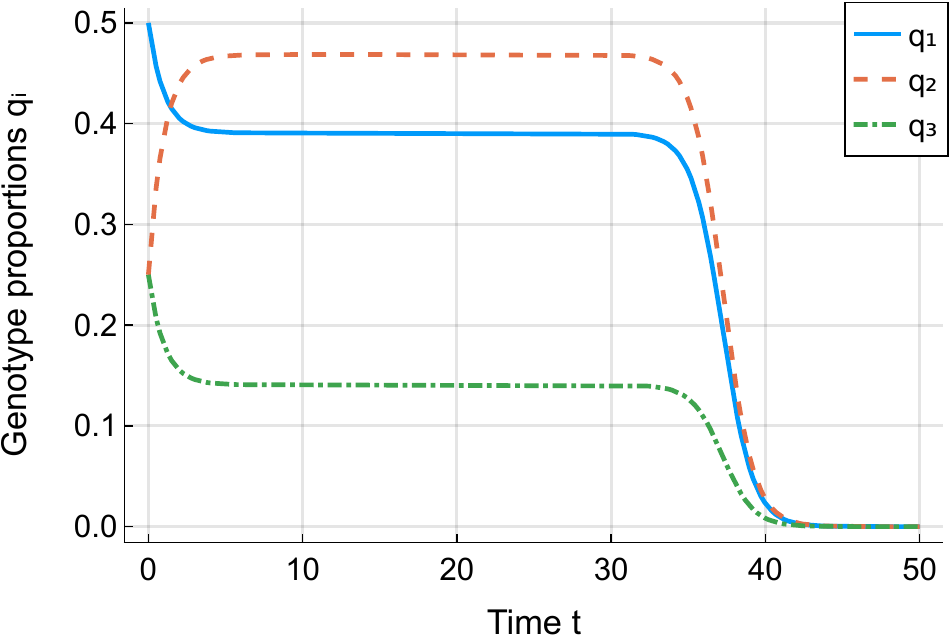}
    \caption{Numerical solution of the system \eqref{eq:system3-original}
             with initial condition $q_0 = (0.5, 0.25, 0.25)^T$ obtained by the
             fifth-order Runge-Kutta method of Tsitouras \cite{tsitouras2011runge}
             implemented in OrdinaryDiffEq.jl \cite{rackauckas2017differentialequations}
             in Julia \cite{bezanson2017julia} with absolute and relative tolerances
             $10^{-8}$.}
    \label{fig:system3_original_Tsit5}
\end{figure}

The method of Tsitouras is tested quite well in practice. But maybe something is wrong with it for this specific problem?
To double-check the results, we also apply the classical fifth-order method of Dormand and
Prince \cite{dormand1980family} well-known from \texttt{ode45} in MATLAB
\cite{shampine1997matlab}.
The short-term results shown in Figure~\ref{fig:system3_original_DP5}
are the same as before. However, this time the numerical solutions blow up instead.
\begin{figure}[ht]
\centering
    \begin{subfigure}{0.49\textwidth}
        \includegraphics[width=\textwidth]{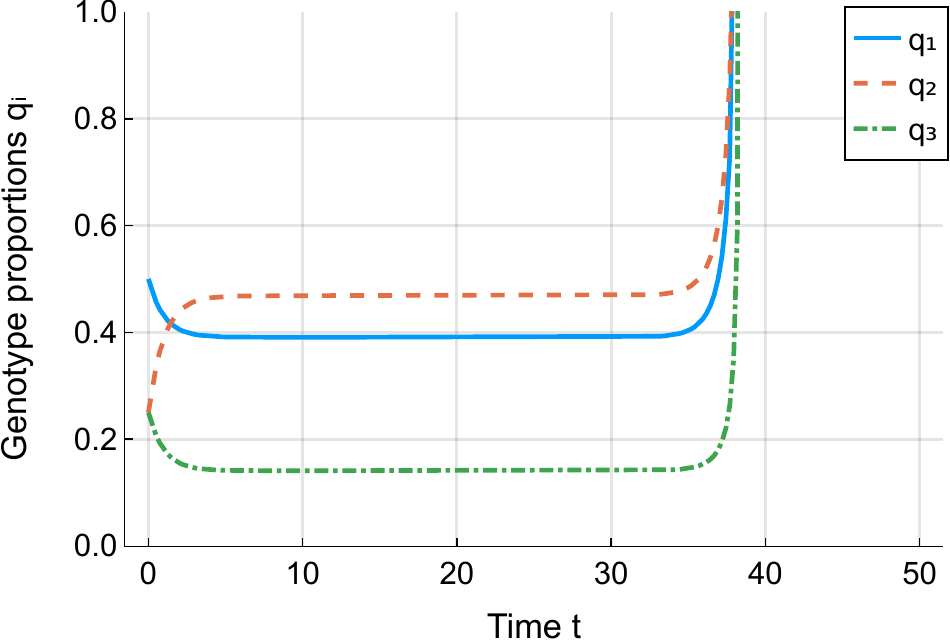}
        \caption{Zoom-in.}
    \end{subfigure}%
    \hspace*{\fill}
    \begin{subfigure}{0.49\textwidth}
        \includegraphics[width=\textwidth]{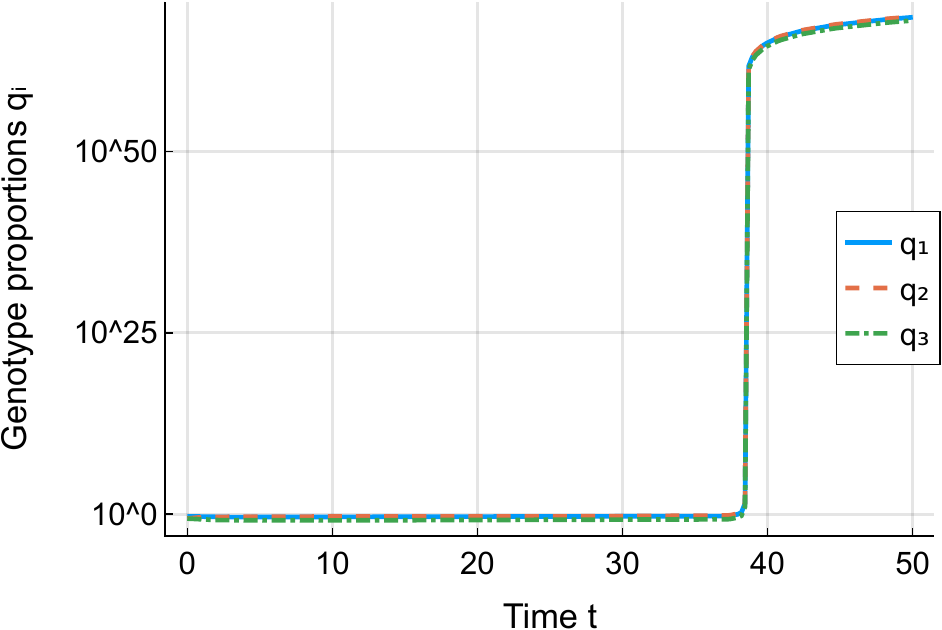}
        \caption{Zoom-out.}
    \end{subfigure}%
    \caption{Numerical solution of the system \eqref{eq:system3-original}
             with initial condition $q_0 = (0.5, 0.25, 0.25)^T$ obtained by the
             fifth-order Runge-Kutta method of Dormand and Prince \cite{dormand1980family}
             implemented in OrdinaryDiffEq.jl \cite{rackauckas2017differentialequations}
             in Julia \cite{bezanson2017julia} with absolute and relative tolerances
             $10^{-8}$.
             (a) Focus on the tendency towards steady states, (b) Change of the system behavior to blow up. }
    \label{fig:system3_original_DP5}
\end{figure}

The results for both methods do not change significantly if we use stricter tolerances.
We have also tested other methods (explicit and implicit Runge-Kutta methods, Rosenbrock-type
methods, multistep methods) implemented in a variety of software packages and programming
languages. The results are all similar to the two prototypical examples shown above --- both
contradicting the behavior we expected from the models. This is certainly something we need to
understand, in particular for studying the fixation or loss of alleles or genotypes over a longer timeframe.

\section{Dynamical system analysis}

As numerical simulations show surprising behavior of the solutions, we have a deeper look at the analysis of the dynamical systems.
The aim is to understand why the numerical simulations fail and to get ideas on how to overcome these difficulties by changing the system.

\subsection{A reduced 2-component model}

For some easier visualization, we regard an inheritance model for only
two genotypes ($q_1$ and $q_2$) and one allele, including some mutation.
In the example of studying flower colors, the two genotypes could be yellow and orange flowers that do not mix their color.
The mutation describes an outcome of genotype $q_i$ if the parent generation has the genotype $q_j$ with $i\neq j$. 
Given a parameter $a\in (0,1)$, the dynamics read
 \begin{equation}
\label{eq:system2-original}
\begin{aligned}
    q_1' &= f_1(q)= a q_1^2 + q_1 q_2 + (1-a) q_2^2 - q_1, \\
    q_2' &= f_2(q)= (1-a) q_1^2 + q_1 q_2 + a q_2^2 - q_2.
\end{aligned}
\end{equation}
The vector field visualization in Figure~\ref{fig:system2_original_vectorfield} illustrates the dynamics of the system.

The system \eqref{eq:system2-original} has two steady states, $(0,0)^T$ and $(1/2, 1/2)^T$,
where only the latter fulfills the conservation condition $q_\mathrm{sum}=q_1+q_2=1$ and therefore lies on the hyperplane  given by $q_2=1-q_1$.
This line represents an invariant manifold for $q_\mathrm{sum}$ fulfilling the conservation condition. 
The trivial steady state  $q^\star = 0$ is stable while the non-trivial steady state is unstable.
Figure~\ref{fig:system2_original_vectorfield} shows this behavior since initial values with $q_1 + q_2 < 1$ have trajectories leading to the trivial steady state.
Initial conditions with $q_1+q_2>1$ have trajectories with $\lim_{t \to \infty} q_i(t)= \infty$.

\begin{figure}[ht]
\centering
    \includegraphics[width=0.5\textwidth]{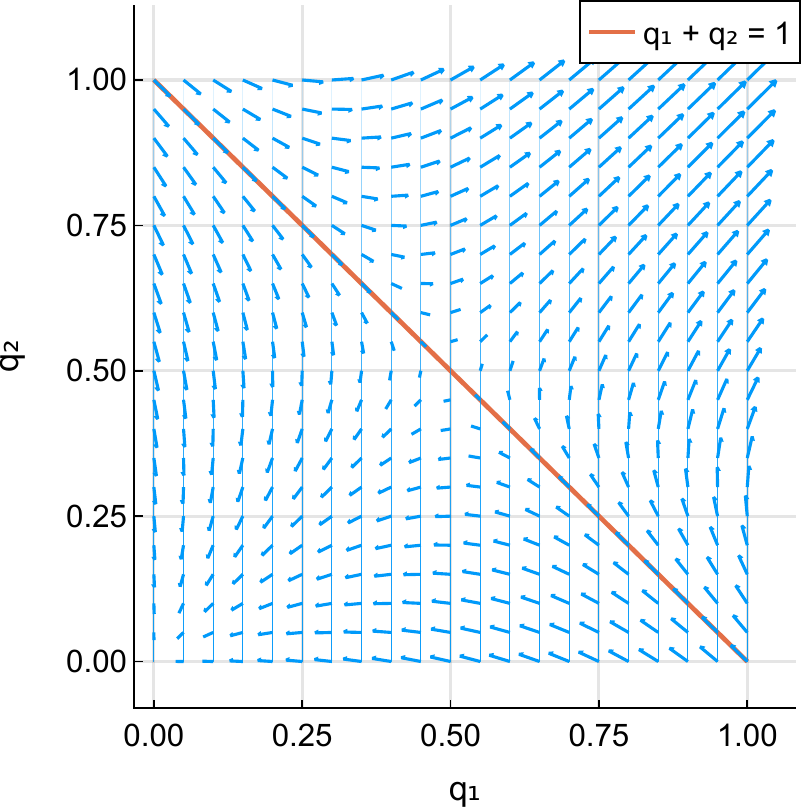}
    \caption{Visualization of the  
        vector field $f$
             of the 2-component system
             \eqref{eq:system2-original} with $a = 0.7$.
             The invariant manifold given by $q_1 + q_2 = 1$
             is highlighted.
             The length of the lines is proportional to
             $\sqrt{\|f(q)\|}$.}
    \label{fig:system2_original_vectorfield}
\end{figure}

Numerical solutions of \eqref{eq:system2-original} behave qualitatively
similar to their analogs for the 3-component model \eqref{eq:system3-original}:
The numerical solutions shown in Figure~\ref{fig:system2_original_Tsit5_Vern6}
seem to converge to the (unstable) steady state $(1/2, 1/2)^T$ at first but
eventually go to zero (the stable steady state) or blow up.

\begin{figure}[htb]
\centering
    \begin{subfigure}{0.49\textwidth}
        \includegraphics[width=\textwidth]{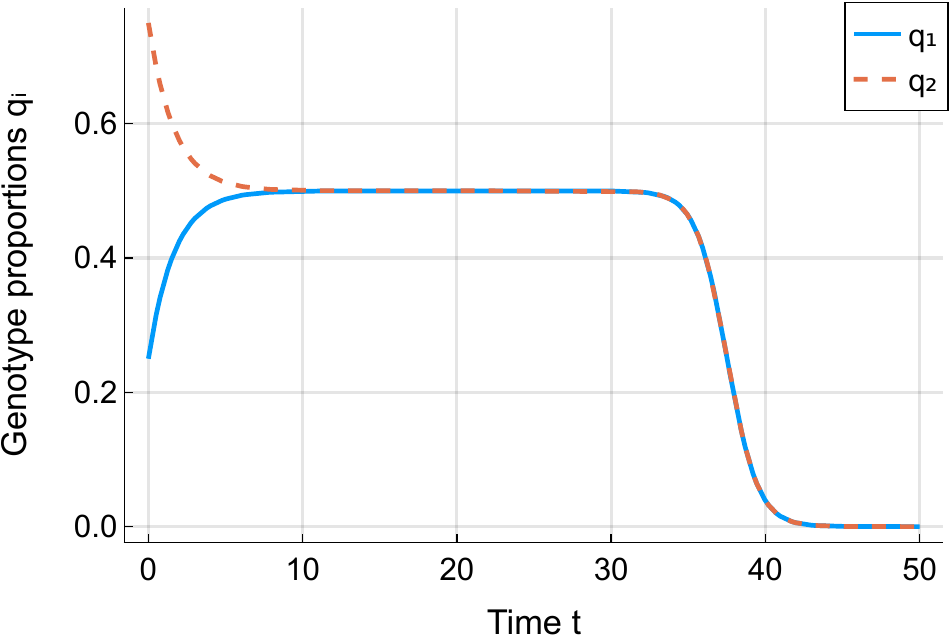}
        \caption{Tsitouras.}
    \end{subfigure}%
    \hspace*{\fill}
    \begin{subfigure}{0.49\textwidth}
        \includegraphics[width=\textwidth]{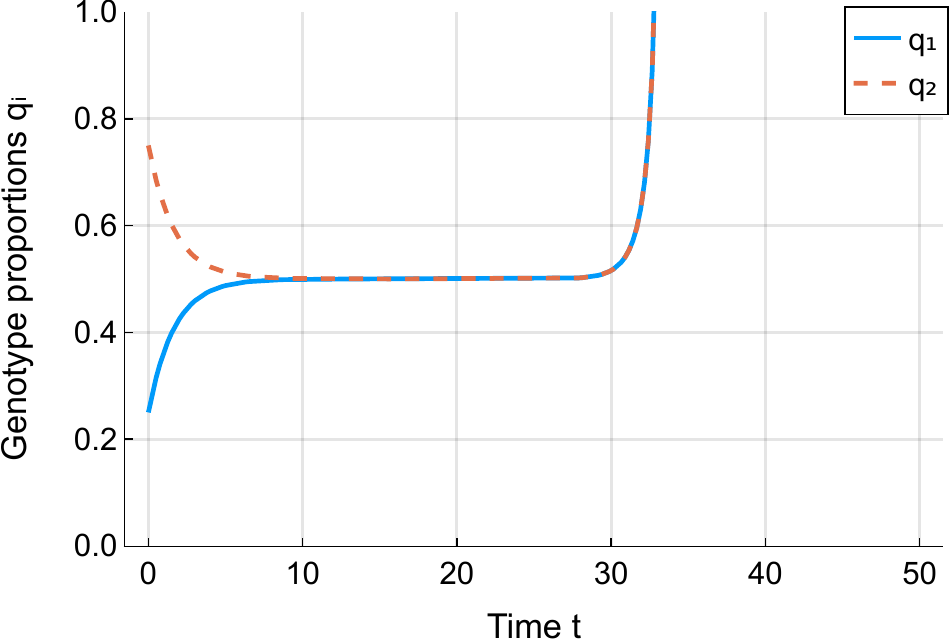}
        \caption{Verner.}
    \end{subfigure}%
    \caption{Numerical solution of the system \eqref{eq:system2-original}
             with $a = 0.7$ and initial condition $q_0 = (0.25, 0.75)^T$ obtained by the
             fifth-order Runge-Kutta method of Tsitouras \cite{tsitouras2011runge}
             and the sixth-order method of Verner \cite{verner2010numerically}
             implemented in OrdinaryDiffEq.jl \cite{rackauckas2017differentialequations}
             in Julia \cite{bezanson2017julia} with absolute and relative tolerances
             $10^{-8}$. For these tolerances, the method of Dormand and Prince
             \cite{dormand1980family} yields
             a numerical solution going to zero.}
    \label{fig:system2_original_Tsit5_Vern6}
\end{figure}

\subsection{Analysis of the 3-component model}

Now, we investigate the steady states of the system \eqref{eq:system3-original},
so $q^\star$ with $f_i(q^\star)=0$ for $i=1,2,3$.

Solving the equation gives a one-parameter family of
non-negative steady states
\begin{equation}
  q^\star=  \left( q_1^\star, 2\left( \sqrt{q_1^\star} - q_1^\star \right), 1 + q_1^\star - 2 \sqrt{q_1^\star} \right)^T,
    \qquad
    q_1^\star \in [0,1],
\end{equation}
fulfilling the conservation condition $\sum_{i=1}^3 q_i^\star = 1$ and reflecting the binomial distribution of the two alleles.
The trivial state $q^\star = 0 \in \mathbb{R}^3$ is a steady state as well but does not fulfill the conservation condition.

To investigate the stability properties of the steady states, we compute
the eigenvalues of the Jacobian
\begin{equation}
    f'(q)
    =
    \begin{pmatrix}
        -1 + 2 q_1 + q_2 & q_1 + q_2 / 2 & 0 \\
        q_2 + 2 q_3 & -1 + q_1 + q_2 + q_3 & 2 q_1 + q_2 \\
        0 & q_2 / 2 + q_3 & -1 + q_2 + 2 q_3
    \end{pmatrix}.
\end{equation}
Its eigenvalues are $\lambda_k = -1 + k \sum_{i=1}^3 q_i$, $k \in \{0, 1, 2\}$,
with associated eigenvectors
\begin{equation}
    v_0 = \begin{pmatrix} 1 \\ -2 \\ 1 \end{pmatrix},
    \quad
    v_1 = \begin{pmatrix} -(2 q_1 + q_2) \\ 2 (q_1 - q_3) \\ q_2 + 2 q_3 \end{pmatrix},
    \quad
    v_2 = \begin{pmatrix} (2 q_1 + q_2)^2 \\ 2 (2 q_1 + q_2) ( q_2 + 2 q_3) \\ (q_2 + 2 q_3)^2 \end{pmatrix},
\end{equation}
for $q \ne 0$; $f'(0) = -\operatorname{I}$ has the eigenvalue $-1$ with
multiplicity three.
In particular, the non-trivial steady states are unstable with
eigenvalues $-1$, $0$, and $1$. The trivial steady state
$q^\star = 0$ is asymptotically stable. All of these non-negative steady states
are visualized in
Figure~\ref{fig:system3_original_steadystates}.

\begin{figure}[htb]
\centering
    \includegraphics[width=0.75\textwidth]{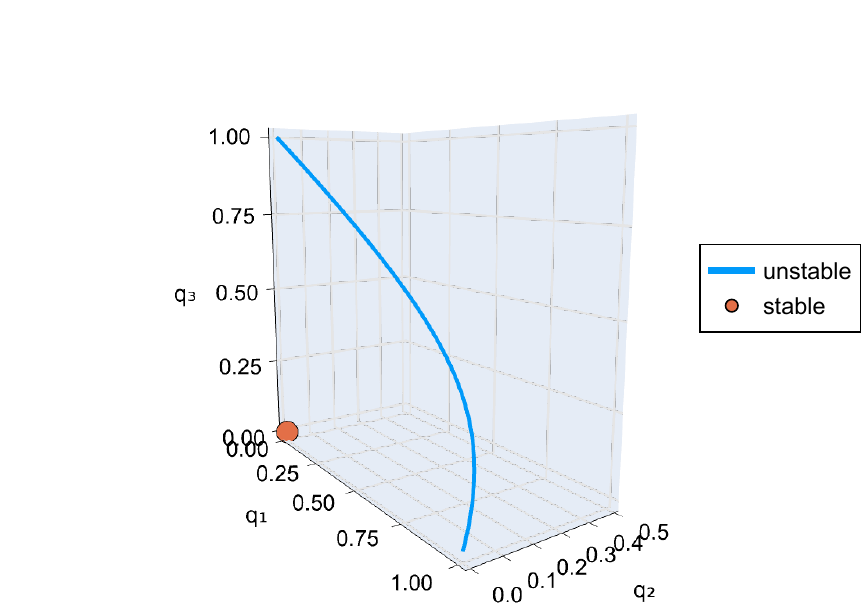}
    \caption{Non-negative steady states
             of the system \eqref{eq:system3-original}.
             All non-trivial steady states are unstable.}
    \label{fig:system3_original_steadystates}
\end{figure}

The dynamical system analysis allows us to answer the first question of why we are having unstable numerical simulations:
The steady states on the hyperplane with $\sum_i q_i =1$ are unstable, both for the 2-component system \eqref{eq:system2-original} and for the 3-component system \eqref{eq:system3-original}.
Useful tools for gaining these information are either the visualization of the dynamical system as a vector field or the analysis of the eigenvalues of the Jacobian.
The preferred method depends on the dimension of the problem.

Small perturbations occurring from floating point errors in the numerical simulations make the system leave the (unstable) hyperplane with $\sum_i q_i =1$ and the solutions tend to zero or blow up.

The next question is, how we can overcome these difficulties.

\section{Using invariants for gaining better models}

The systems \eqref{eq:system3-original} and \eqref{eq:system2-original} are constructed under the assumption that the total amount $\sum_i q_i =1$ is conserved.
From a modeling perspective, this property seems to remain unchanged for the differential equation system and therefore gives an invariant.
Numerical simulations show that $\sum_i q_i$ is not conserved during the simulations.
The analysis of the steady states even proves the instability of steady states fulfilling $\sum_i q_i =1$.

\subsection{Qualitative properties of invariants}

The question of how to overcome these problems leads us to the analysis of invariants.

\begin{definition}[First integrals, cf.\ Section~2.1 of \cite{goriely2001integrability}]
Let $U\subset \mathbb{R}^M$ be open. Consider the ODE
$q'= f(q)$ with $f\colon U \to \mathbb{R}^M$.
A (time-independent) \emph{first integral} of the ODE is a $C^1$-function
$J\colon U \to \mathbb{R}$ with $J'(q) \cdot f(q) = 0$ for all $q \in U$.
The first interval is called \emph{trivial} if
$\exists c \, \forall q \in U\colon J(q) = c$.
\end{definition}

Clearly, a first integral is an invariant of the ODE, i.e., it does not
change along a solution trajectory. Indeed, if $q$ solves the ODE
$q'(t) = f\bigl( q(t) \bigr)$,
\begin{equation}
    \frac{\dif}{\dif t} J\bigl( q(t) \bigr)
    =
    J'\bigl( q(t) \bigr) \cdot q'(t)
    =
    J'\bigl( q(t) \bigr) \cdot f\bigl( q(t) \bigr)
    =
    0.
\end{equation}

We first prove that the sum $\sum_i q_i$ is not a first integral for \eqref{eq:system3-original}.
The functional $J(q)=\sum_i q_i$ has the gradient $(1,1,1)^T$.  Thus,
\begin{equation}
\begin{aligned}
   J'(q) \cdot f(q)
   &= \left ( \frac{\partial }{\partial q} \sum_i q_i \right) \cdot f(q)
   =
    \sum_i f_i
    \\
    &=
    q_1^2 + q_2^2 + q_3^2 + 2 q_1 q_2 + 2 q_1 q_3 + 2 q_2 q_3 - q_1 - q_2 - q_3
    \\
    &=
    (q_1 + q_2 + q_3)^2 - (q_1 + q_2 + q_3)
    \neq 0
\end{aligned}
\end{equation}
in general. An analogous result holds for the 2-component model.
Thus, the systems \eqref{eq:system3-original} and \eqref{eq:system2-original}
do not conserve the sum $J(q)=\sum_i q_i$ for all chosen initial values.
But once we choose initial values fulfilling $\sum_i q_i =1$, the analytical
solution does not leave the level set of $J$.
Second integrals take this property into account:

\begin{definition}[Second integrals, cf.\ Section~2.5 of \cite{goriely2001integrability}]
    Let $U\subset \mathbb{R}^M$ be an open subset. Consider the ODE
    $q'= f(q)$ with $f\colon U \to \mathbb{R}^M$.
    A \emph{second integral} of the ODE $q'(t) = f\bigl( q(t) \bigr)$
    with vector field $f\colon U \to \R^M$ is a $C^1$-function
    $J\colon U \to \R$ such that there is a function
    $\alpha\colon U \to \R$ satisfying
    $\forall q \in U\colon J'(q) \cdot f(q) = \alpha(q) J(q)$.
\end{definition}

Clearly, second integrals $J$ are conserved by solutions of the ODE
whenever the initial conditions starts on the manifold given by
$J(q) = 0$.

The systems \eqref{eq:system3-original} and \eqref{eq:system2-original}
have the total sum $\sum_i q_i = 1$ as a second integral.
The difference $u := \sum_i q_i - 1$ between the total sum and unity satisfies
\begin{equation}
\label{eq:deviation-sum-qi}
    u'
    =
    \sum_i q_i'
    =
    \biggl( \sum_i q_i \biggr)^2 - \biggl( \sum_i q_i \biggr)
    =
    (1 + u)^2 - (1 + u)
    =
    (1 + u) u.
\end{equation}
All Runge-Kutta methods preserve such linear second integrals
\cite{tapley2021preservation}.

The ODE \eqref{eq:deviation-sum-qi} is very similar to the ODE
of logistic growth and can be solved analytically,
\begin{equation}
    u(t) = \frac{ u_0 }{ \e^{-t} (1 + u_0) - u_0 },
    \qquad
    u_0 = u(0).
\end{equation}
This ODE \eqref{eq:deviation-sum-qi} has two steady states,
the unstable steady state $u = 0$, corresponding to $\sum_i q_1 = 1$, and
the asymptotically stable steady state $u = -1$ corresponding to $\sum_i q_i = 0$.
For an initial condition $u_0 < 0$, $u(t) \to -1$ for $t \to \infty$.
For initial data $u_0 > 0$, $u(t)$ grows without bounds and blows up in finite time.
We discovered this behavior for the 2-component model in Fig.~\ref{fig:system2_original_Tsit5_Vern6}.

An initial condition of the original model
\eqref{eq:system3-original} needs to satisfy $\sum_i q_i = 1$.
Thus, the system \eqref{eq:deviation-sum-qi} is in the unstable
steady state. When solving \eqref{eq:system3-original} numerically,
it can be expected that even tiny floating point errors can accumulate
over time, either resulting in a blow-up in finite time or a convergence
to the biologically meaningless state $\sum_i q_i = 0$.
To verify this, we also performed computations using different floating
point types. The results shown in Figure~\ref{fig:system3_original_Float32_BigFloat}
are qualitatively the same as before, only the time changes when deviations from the
steady state become visible. 
\begin{figure}[htb]
\centering
    \begin{subfigure}{0.49\textwidth}
        \includegraphics[width=\textwidth]{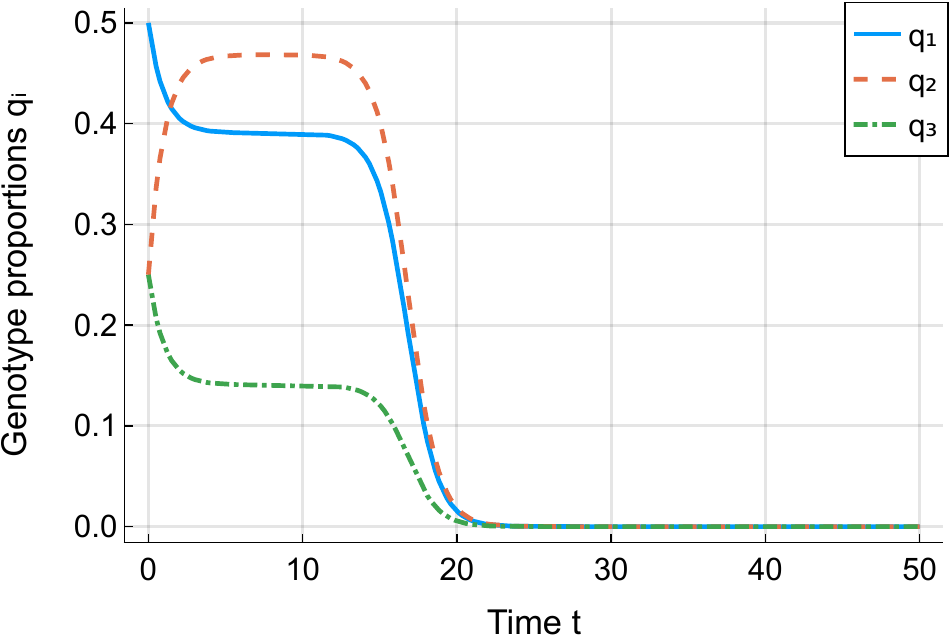}
        \caption{Tsitouras, \texttt{Float32}.}
    \end{subfigure}%
    \hspace*{\fill}
    \begin{subfigure}{0.49\textwidth}
        \includegraphics[width=\textwidth]{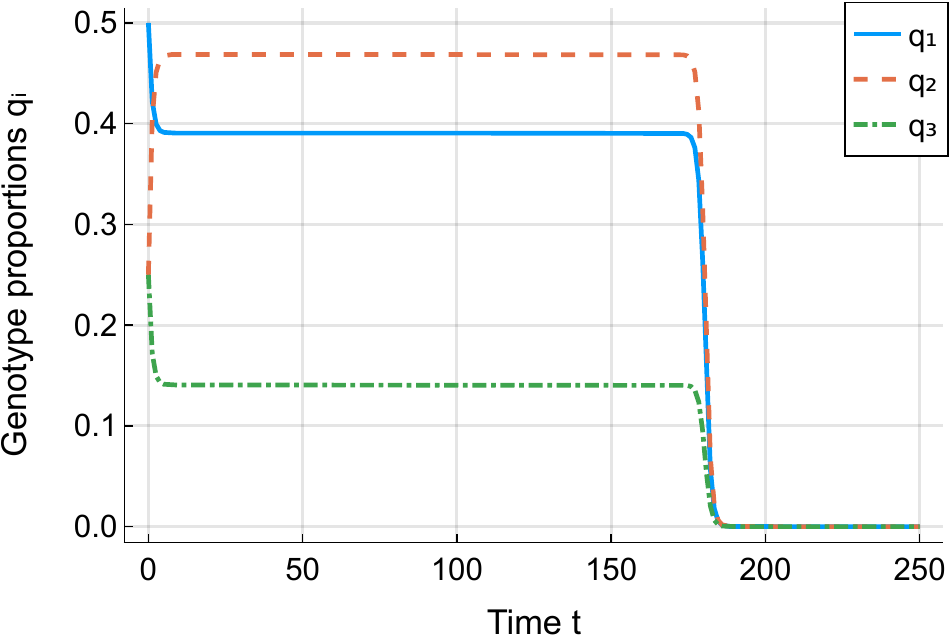}
        \caption{Tsitouras, \texttt{BigFloat}.}
    \end{subfigure}%
    \\
    \begin{subfigure}{0.49\textwidth}
        \includegraphics[width=\textwidth]{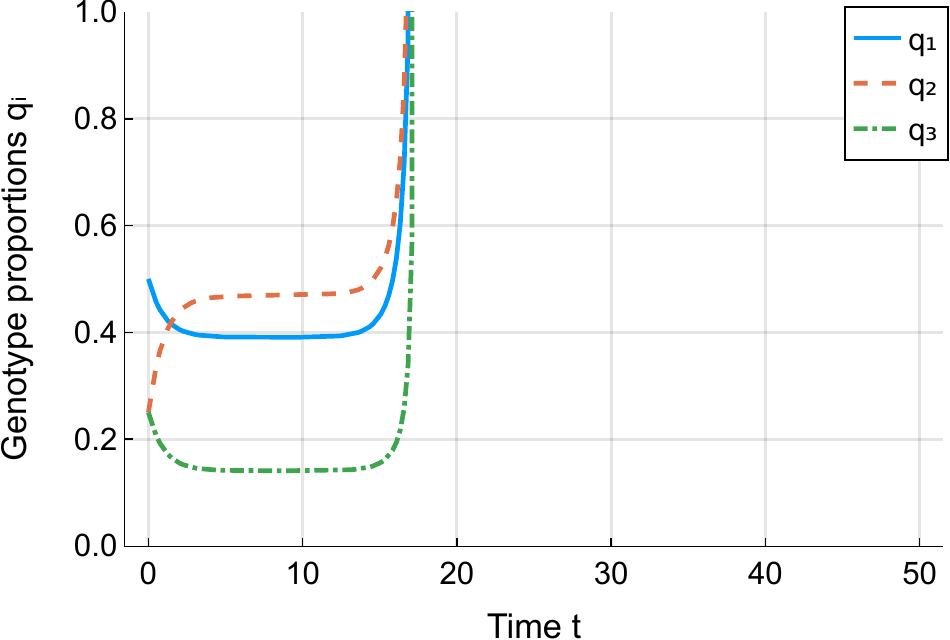}
        \caption{Dormand and Prince, \texttt{Float32}.}
    \end{subfigure}%
    \hspace*{\fill}
    \begin{subfigure}{0.49\textwidth}
        \includegraphics[width=\textwidth]{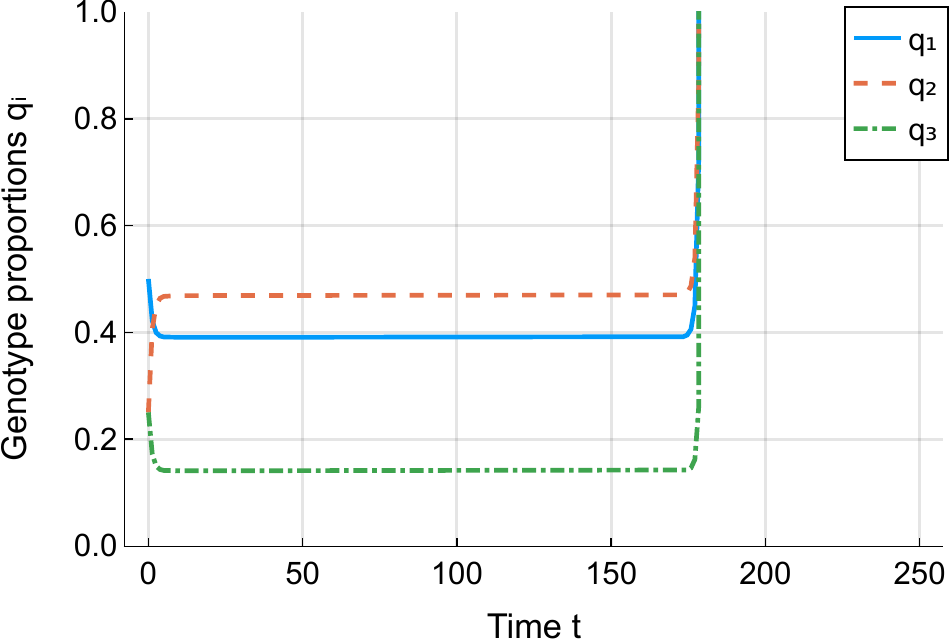}
        \caption{Dormand and Prince, \texttt{BigFloat}.}
    \end{subfigure}%
    \caption{Numerical solution of the system \eqref{eq:system3-original}
             with initial condition $q_0 = (0.5, 0.25, 0.25)^T$ obtained by the
             fifth-order Runge-Kutta methods of Tsitouras \cite{tsitouras2011runge}
             and Dormand and Prince \cite{dormand1980family}
             implemented in OrdinaryDiffEq.jl \cite{rackauckas2017differentialequations}
             in Julia \cite{bezanson2017julia} with different floating point types.
             For 32 bit floating point numbers \texttt{Float32} we use tolerances
             $10^{-7}$. For high-precision floating point numbers
             \texttt{BigFloat}, we use stricter tolerances $10^{-14}$.}
    \label{fig:system3_original_Float32_BigFloat}
\end{figure}

To overcome these issues, we reformulate the system so that the total sum
becomes a first integral. First, we present a general result showing that
this is possible.

\begin{theorem}
\label{thm:affine-second-to-first-integral}
    Consider an ODE $q'(t) = f\bigl( q(t) \bigr)$ with affine second integral $J$.
    Then, there is a modified vector field $\widetilde{f}$ having
    $J$ as first integral such that
    \begin{equation*}
        \forall y\colon \quad
        J(y) = 0 \implies f(y) = \widetilde{f}(y).
    \end{equation*}
    In particular, the original ODE is equivalent to the modified
    ODE $\widetilde{q}'(t) = \widetilde{f}(\widetilde{q}(t)))$
    whenever the initial condition is in the zero set of $J$.
   The modified vector field $\widetilde{f}$ can be constructed as $\widetilde{f}(y) = f(y) -\alpha(y) J'(y) / \| J'(y) \|^2 J(y)$, where we interpret the gradient $J'(y)$ as vector field.
\end{theorem}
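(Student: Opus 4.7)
The plan is to verify directly that the candidate $\widetilde{f}(y) = f(y) - \alpha(y) \, J'(y) \, J(y) / \|J'(y)\|^2$ has the two asserted properties, after first exploiting the affine structure of $J$ to make sense of the denominator. Writing $J(q) = a^T q + b$ with $a \in \mathbb{R}^M$ and $b \in \mathbb{R}$, we have $J'(q) \equiv a$, so $\|J'(q)\|^2 = \|a\|^2$ is a nonzero constant (we may assume $J$ is non-trivial; otherwise any $\widetilde{f} = f$ works). In particular, $\widetilde{f}$ is well-defined on all of $U$ and inherits the regularity of $f$ and $\alpha$, so existence and uniqueness issues for the modified ODE are no worse than for the original.

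Next I would check the first-integral property by direct computation. Using the second-integral identity $J'(y) \cdot f(y) = \alpha(y) J(y)$, I compute
\begin{equation*}
  J'(y) \cdot \widetilde{f}(y)
  = J'(y) \cdot f(y) - \alpha(y) \, \frac{J'(y) \cdot J'(y)}{\|J'(y)\|^2} \, J(y)
  = \alpha(y) J(y) - \alpha(y) J(y) = 0,
\end{equation*}
so $J$ is a first integral of $\widetilde{f}$. The agreement on the zero set is even more immediate: if $J(y) = 0$, then the correction term vanishes, giving $\widetilde{f}(y) = f(y)$.

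For the equivalence statement, I would argue in both directions using the respective integral properties. If $q$ solves the original ODE with $J(q(0)) = 0$, then because $J$ is a second integral, $J(q(t)) = 0$ for all $t$ in the interval of existence; hence $f(q(t)) = \widetilde{f}(q(t))$ along the trajectory, so $q$ also satisfies the modified ODE. Conversely, if $\widetilde{q}$ solves the modified ODE with $J(\widetilde{q}(0)) = 0$, then since $J$ is now a first integral of $\widetilde{f}$ we again have $J(\widetilde{q}(t)) = 0$, so the two vector fields agree along $\widetilde{q}$ and it also solves the original ODE. Uniqueness (under standard Lipschitz assumptions on $f$ and $\alpha$) then identifies the two solutions.

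There is really no serious obstacle here: the argument is a one-line algebraic verification dressed up by a remark on the zero set. The only point that deserves care is ensuring $\|J'\|$ is bounded away from zero, which is exactly where the affineness of $J$ enters — it rules out the possibility that $J'$ degenerates on part of $U$ and keeps $\widetilde{f}$ as smooth as the data. If one wished to drop the affineness hypothesis, the same construction works locally on any open set where $J' \neq 0$, but then the regularity and global existence have to be handled more carefully.
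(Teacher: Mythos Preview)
Your proposal is correct and follows essentially the same route as the paper: both exploit the affineness of $J$ to note that $J'$ is a (nonzero) constant vector, then verify by a one-line computation that $J'(y)\cdot\widetilde f(y)=0$ and that the correction vanishes on $\{J=0\}$. If anything, your write-up is slightly more complete, since you spell out the two-sided equivalence argument on the zero set and the regularity/well-definedness point, which the paper leaves implicit.
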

\begin{proof}
    We have $J'(y) \cdot f(y) = \alpha(y) J(y)$. Choose the ansatz
    $\widetilde{f}(y) = f(y) + J(y) c$ where $c$ is a constant vector that will be suitably chosen. 
    Then,
    \begin{equation*}
        J'(y) \cdot \widetilde{f}(y)
        =
        J'(y) \cdot f(y) + J(y) J'(y) \cdot c
        =
        \left( \alpha(y) + J'(y) \cdot c \right) J(y).
    \end{equation*}
    Since $J$ is affine, $J'(y)$ is constant.
    If $J'(y) = 0$, $J(y)=c$ and $J$ is already a (trivial) first integral and $c$ can be chosen arbitrarily.
    Otherwise, choose, e.g., $c = - \alpha(y) J'(y) / \| J'(y) \|^2$
    and obtain $J'(y) \cdot \widetilde{f}(y) = 0$.
    Thus, $J$ is a first integral of the vector field
    $\widetilde{f}$.
\end{proof}

We will see, that various choices of the coefficient $c$ used
in the proof above are possible, leading to different modified models. 
Further, the modification changes the set of steady states of the ODE leading to different numerical behavior. 

\subsection{Reformulation of the models}

The theory of invariants allows reformulations of the system.
The aim is to modify the system in such a way that the stability properties of the steady states improve while retaining the dynamic behavior.
We cannot expect to have asymptotically stable steady states but having stable steady states is already an improvement. 
Then, numerical simulations will be stable and floating point errors do not change the qualitative behavior of the system.

\subsubsection{Reformulation of the 2-component model}

We start with reformulating the system \eqref{eq:system2-original} intuitively by using the property $q_1+q_2 =1$. First, we reorder the system as
\begin{equation}
    q_1' = a q_1^2 + q_1 \underbrace{(q_2 - 1)}_{= -q_1} + (1 - a) q_2^2,
    \quad
    q_2' = (1 - a) q_1^2 + \underbrace{(q_1 - 1)}_{= -q_2} q_2 + a q_2^2.
\end{equation}
Replacing $q_2-1=-q_1$ in the first equation and $q_1-1=-q_2$ in the second gives the new system
\begin{equation}
\label{eq:system2-modified}
\begin{aligned}
    q_1' &= (1-a) \bigl( - q_1^2 + q_2^2\bigr), \\
    q_2' &= (1-a) \bigl( q_1^2 -  q_2^2 \bigr).
\end{aligned}
\end{equation}
In this system, the sum of the components is a first integral since $q_1'+q_2'=0$ without any further assumptions.

\begin{remark}\label{rem:changes}
    The process of changing a dynamical system such that an (affine)
    second integral becomes a first integral is not unique. 

    A second integral for system \eqref{eq:system2-original} is given by $J= u=\sum_i q_i -1$ in \eqref{eq:deviation-sum-qi}. Indeed,
    choosing the function $\alpha(q)= \sum_i q_i$
    yields the defining equation 
    $J'(q) \cdot f(q) = \alpha (q) J(q)$. 
    Based on the second integral $J$ we reformulate the system as $\widetilde{f}(q) = f(q) + J(q) c$.

    Our intuitive approach leading to system \eqref{eq:system2-modified} uses the special choice $c=-q$. 
    We prove by a calculation that the modified system has a first integral: 
    The function $\alpha(q)= \sum_i q_i$ can be written as $\alpha (q) = J'(q) \cdot q$ with $J(q) = \sum_i q_i -1$. 
    The new vector field $\widetilde{f}$ is given as $\widetilde{f}(q)=f(q)- J(q)q$ and the new ODE has a first integral because 
        \begin{equation}
             \frac{\dif}{\dif t} J\bigl( q(t) \bigr) = J'(q) \cdot \widetilde{f}(q) = J'(q) \cdot f(q) - J'(q) \cdot J(q) q = \alpha (q) J(q) - J'(q) \cdot J(q) q = 0,
        \end{equation}
        where we first use the definition of $\widetilde{f}$, then the second integral property of $f$ and thirdly the observation that $\alpha (q) = J'(q) \cdot q$. 
    
    On the other hand,
    the modified vector field $\widetilde{f}$ constructed 
    in the proof of Theorem~\ref{thm:affine-second-to-first-integral}
    with $c=- \alpha(q) J'(q) / \| J'(q) \|^2$
    yields the ODE
    \begin{equation}
    \begin{aligned}
        q_1'
        &=
        \left( a q_1^2 + q_1 q_2 + (1 - a) q_2^2 - q_1 \right) -
            \frac{1}{2} \left( (q_1 + q_2)^2 - (q_1 + q_2) \right)
        \\
        &=
        (a - 1/2) (q_1^2 - q_2^2) - \frac{1}{2} (q_1 - q_2),
        \\
        q_2'
        &=
        \left( (1 - a) q_1^2 + q_1 q_2 + a q_2^2 - q_2 \right) -
        \frac{1}{2} \left( (q_1 + q_2)^2 - (q_1 + q_2) \right)
        \\
        &=
        (1/2 - a) (q_1^2 - q_2^2) + \frac{1}{2} (q_1 - q_2),
    \end{aligned}
    \end{equation}
    for the 2-component model \eqref{eq:system2-original}. This is
    clearly different from \eqref{eq:system2-modified} in general
    (but of course equivalent on the invariant manifold).
\end{remark}

We continue our investigations with the new system \eqref{eq:system2-modified}.
This system has the non-negative steady states $(q_1^\star, q_1^\star)^T$ with $q_1^\star \geq 0$.
For $q_1^\star \ne 1/2$, the steady states do not fulfill the condition $\sum_{i=1}^2 q_i = 1$.
The vector field is shown in Figure~\ref{fig:system2_modified_vectorfield}.
For any initial conditions, the dynamics lead to a steady state $(q_1^\star, q_1^\star)^T$ with $q_1^\star= 1/2 \, ( q_1(0) + q_2(0))$.

\begin{figure}[ht]
\centering
    \includegraphics[width=0.5\textwidth]{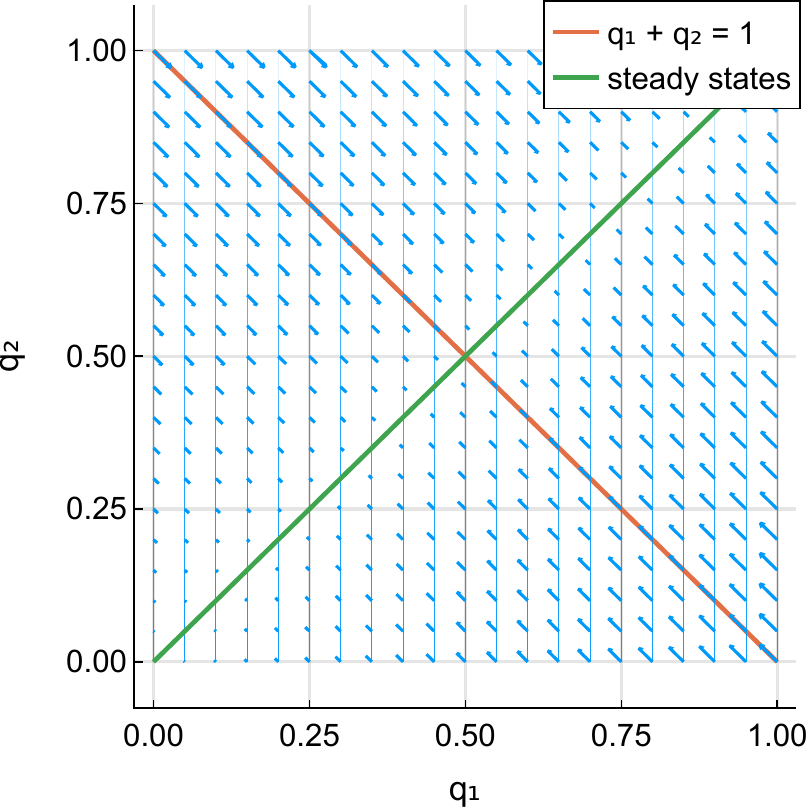}
    \caption{Visualization of the right-hand side vector field $f$
             of the modified 2-component system
             \eqref{eq:system2-modified} with $a = 0.7$.
             The invariant manifold given by $q_1 + q_2 = 1$
             is highlighted.
             The length of the lines is proportional to
             $\sqrt{\|f(q)\|}$.}
    \label{fig:system2_modified_vectorfield}
\end{figure}

The stability of the steady states can be assured by calculating the eigenvalues of the Jacobian of the system.
The Jacobian
\begin{equation}
   f'(q)
    =
    2 (1 - a)
    \begin{pmatrix}
        -q_1 & q_2 \\
        q_1 & -q_2
    \end{pmatrix}
\end{equation}
has the eigenvalues $\lambda_0 = 0$ and $\lambda_1 = -2 (1 - a) (q_1 +q_2) < 0$
with eigenvectors
\begin{equation}
    v_0 = \begin{pmatrix} q_2 \\ q_1 \end{pmatrix},
    \quad
    v_1 = \begin{pmatrix} -1 \\ 1 \end{pmatrix},
\end{equation}
for $q \ne 0$; $f'(0) = 0$ has the double eigenvalue $0$.
Consequently, the non-trivial steady states are stable.

Starting with any initial values $q_1, q_2>0$, the solutions tend towards the steady states.
If the initial conditions fulfill the condition $\sum_i q_i=1$ of the hyperplane, the solutions remain on the hyperplane.
Deviations on the manifold, for example caused by floating point errors, will not change the overall system behavior and result in deviations of the steady state from the hyperplane.
For this model, a transformation of variables to $p_1=q_1+q_2$ and $p_2=q_2-q_1$ separates the tendency towards the hyperplane from the stationary behavior on the manifold but would change the interpretability of the variables.

\begin{remark}
    The stability of steady states of the modified 2-component
    system \eqref{eq:system2-modified} can be understood easily
    by looking at Figure~\ref{fig:system2_modified_vectorfield}.

    This behavior is also generic  in the sense that it can be
    obtained from the center manifold theorem --- at least locally,
    see \cite[Theorem~I.4]{iooss1998topics} or
    \cite[Theorem~2.7]{marsden1976hopf}.
    Summarized briefly, the center manifold theorem states that a
    dynamical system near a steady state where all eigenvalues of
    the Jacobian have a non-negative real part behaves as follows:
    i) There is a neighborhood $V$ of the steady state and a
    manifold $M$ of the same dimension as the generalized eigenspace associated
    with eigenvalues on the imaginary axis that is invariant under
    the flow as long as the evolving state stays in $V$.
    ii) If the state stays in $V$ for all times, it converges
    (exponentially) to the manifold $M$.
    In our case, the eigenvalue on the imaginary axis is zero and
    the corresponding manifold is the set of steady states.
\end{remark}

In this particular example, a stable model can be derived as well by including the manifold equation $1=q_1+q_2$ directly into \eqref{eq:system2-modified}. Then, the system becomes decoupled and $q_1'=(1-a)(1-2q_1)$ is a single equation describing the evolution without any interpretable mechanisms. This reduction step does not follow the intention of highlighting the interplay between modeling, analysis and numerical simulation, so we will not discuss it further.

\subsubsection{Reformulation of the 3-component model}

Inserting the conservation condition 
$\sum_{i=1}^3 q_i = 1$ into the first equation of \eqref{eq:system3-original} yields
\begin{equation}
    q_1'
    = q_1^2 + q_1 q_2 + \frac{1}{4} q_2^2 - q_1 \underbrace{(q_1 + q_2 + q_3)}_{= 1}
    =
    \frac{1}{4} q_2^2 - q_1 q_3.
\end{equation}
Continuing this process for the other equations as well results in
the modified system
\begin{equation}
\label{eq:system3-modified}
\begin{aligned}
    q_1' &= \frac{1}{4} q_2^2 - q_1 q_3, \\
    q_2' &= -\frac{1}{2} q_2^2 + 2 q_1 q_3, \\
    q_3' &= \frac{1}{4} q_2^2 - q_1 q_3.
\end{aligned}
\end{equation}
Again, as in the case of the 2-component model, this reformulation can be interpreted as turning the second integral $J(q)=u(q)=\sum_i q_i -1$ with $\alpha(q)= \sum_i q_i$ into a first integral for the new vector field $\widetilde{f} = f-J(q) q$. 
The calculations in Remark~\ref{rem:changes} are valid as well for the 3-component system. 

The modified system \eqref{eq:system3-modified} has the non-negative
steady state
$q^\star = ( q_1^\star, q_2^\star, q_3^\star )^T$ with
\begin{equation}
    q_3^\star
    =
    \begin{cases}
        \text{arbitrary} \ge 0, & \text{if } q_1^\star = q_2^\star = 0, \\
        \frac{q_2^{\star 2}}{4q_1^\star}, & \text{if } q_1^\star \ne 0,
    \end{cases}
\end{equation}
for $q_1^\star, q_2^\star \ge 0$ and $q_2^\star = 0$ if $q_1^\star = 0$.
Note in particular that the steady states of \eqref{eq:system3-modified}
satisfying $\sum_i q_i = 1$ are exactly the non-trivial steady states of
the original system \eqref{eq:system3-original}.
The Jacobian is
\begin{equation}
    f'(q)
    =
    \begin{pmatrix}
        -q_3 & q_2 / 2 & -q_1 \\
        2 q_3 & -q_2 & 2 q_1 \\
        -q_3 & q_2 / 2 & -q_1 \\
    \end{pmatrix}.
\end{equation}
For $q \ne 0$, $f'(q)$ has a double eigenvalue $\lambda_0 = \lambda_1 = 0$
with eigenspace spanned by
\begin{equation}
    v_0 = \begin{pmatrix} q_1 \\ 0 \\ -q_3 \end{pmatrix},
    \quad
    v_1 = \begin{pmatrix} q_2 / 2 \\ q_1 + q_3 \\ q_2 / 2 \end{pmatrix},
\end{equation}
as well as the eigenpair
\begin{equation}
    \lambda_2 = -\sum_{i=1}^3 q_i,
    \quad
    v_2 = \begin{pmatrix} 1 \\ -2 \\ 1 \end{pmatrix}.
\end{equation}
Moreover, $f'(0) = 0$.
Thus, all non-negative steady states are associated with eigenvalues of
$f'(q)$ with non-positive real parts.
The non-negative steady states are visualized in
Figure~\ref{fig:system3_modified_steadystates}.

\begin{figure}[htb]
\centering
    \includegraphics[width=0.75\textwidth]{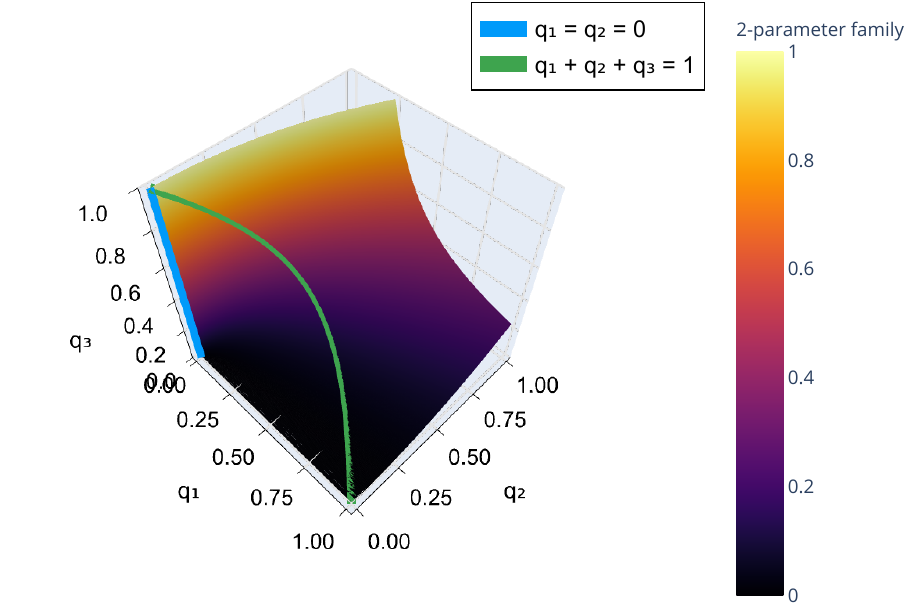}
    \caption{Non-negative steady states
             of the system \eqref{eq:system3-modified}.
             The system Jacobian has only eigenvalues with non-positive
             real parts at all steady states.}
    \label{fig:system3_modified_steadystates}
\end{figure}

As expected based on this stability analysis, numerical solutions behave properly when
applied to the modified systems \eqref{eq:system2-modified} and \eqref{eq:system3-modified}.
This is demonstrated in Figure~\ref{fig:systems_modified_Tsit5}.
\begin{figure}[htb]
\centering
    \begin{subfigure}{0.49\textwidth}
        \includegraphics[width=\textwidth]{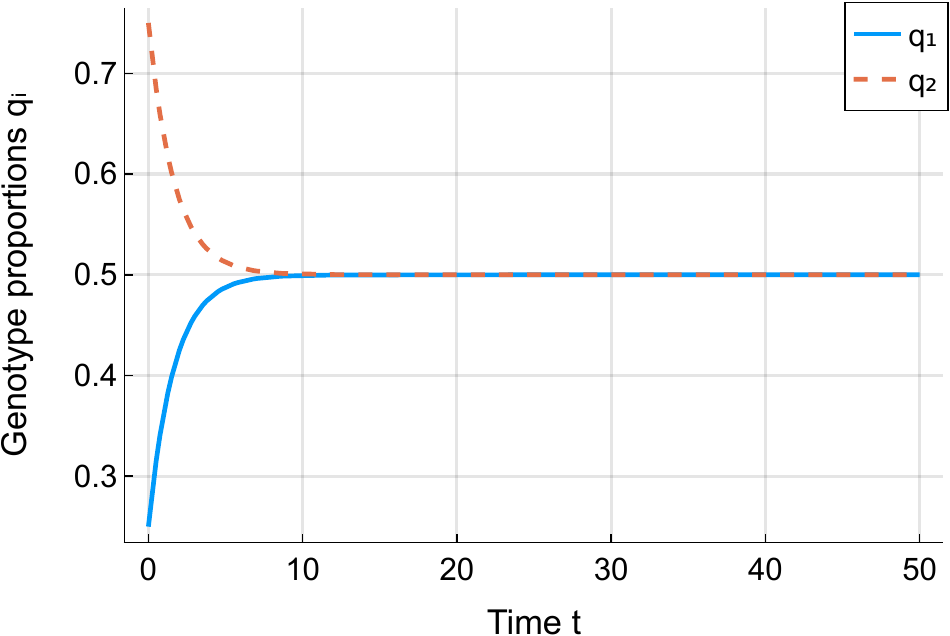}
        \caption{2-component model.}
    \end{subfigure}%
    \hspace*{\fill}
    \begin{subfigure}{0.49\textwidth}
        \includegraphics[width=\textwidth]{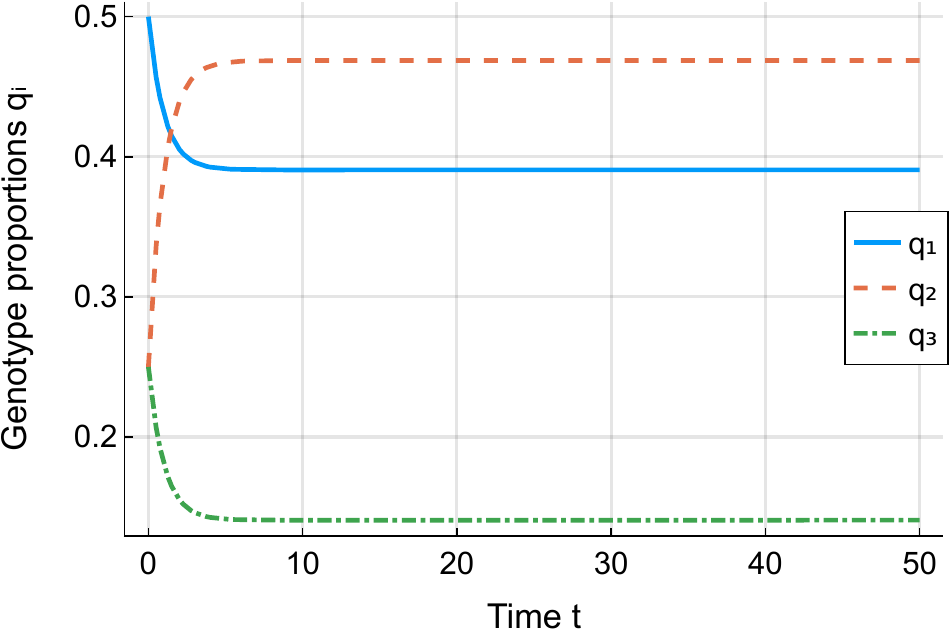}
        \caption{3-component model.}
    \end{subfigure}%
    \caption{Numerical solution of the modified systems \eqref{eq:system2-modified}
             (with $a = 0.7$ and initial condition $q_0 = (0.25, 0.75)^T$) and
             \eqref{eq:system3-modified} (with initial condition $q_0 = (0.75, 0.25, 0.25)^T$)
             obtained by the fifth-order Runge-Kutta method of Tsitouras \cite{tsitouras2011runge}
             implemented in OrdinaryDiffEq.jl \cite{rackauckas2017differentialequations}
             in Julia \cite{bezanson2017julia} with absolute and relative tolerances
             $10^{-8}$.}
    \label{fig:systems_modified_Tsit5}
\end{figure}

The modified systems \eqref{eq:system2-modified} and \eqref{eq:system3-modified} have stable steady states fulfilling $\sum_i q_i^\star=1$.
The modification of the reaction functions changes the system's properties from having a second integral to having a first integral.

\section{Evaluation of the models}

After improving the analytical properties and ensuring the conservation of
$\sum_i q_i$ during the numerical simulations, we like to draw attention to the modeling process.

The models \eqref{eq:system3-original} and \eqref{eq:system2-original} were derived quite directly from the biological application.
The system quantities $q_k$ give the proportion of genotype $k$ in the total population.
The condition $\sum_i q_i =1$ arises therefore naturally from the biological application and the mathematical translation into equations.
The biologically motivated models surprised us during the numerical simulations with their unstable behavior arising from leaving the hyperplane with $\sum_i q_i =1$.
Floating point errors already lead to unstable behavior, resulting in solutions tending towards zero or blowing up.

We used the conservation property to reformulate the ordinary differential equation systems.
The new systems \eqref{eq:system2-modified} and \eqref{eq:system3-modified} have the same dynamics on the hyperplane as the original models \eqref{eq:system3-original} and \eqref{eq:system2-original}.

Summarized, the new models were derived based on analytical considerations and not according to a modeling process. In general, changing the functions of a model might result in differential equations
without any (obvious and direct) connection to the modeling object.
Here, we try to interpret the inheritance of genes in the new model
\eqref{eq:system3-modified}.

\begin{figure}[htb]
\centering
    \includegraphics[width=0.75\textwidth]{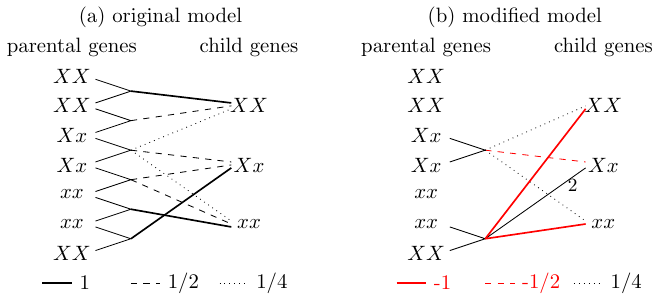}
    \caption{Modeled combinations of genes in the original system \eqref{eq:system3-original} and the modified system \eqref{eq:system3-modified}. }
    \label{fig:inheritance}
\end{figure}

While the system \eqref{eq:system3-original} translates all combinations of parental genes into equations, the modified system \eqref{eq:system3-modified} regards fewer combinations.
The modified system includes only the combination of the mixed genotype $Xx$ and the combination of the two pure genotypes.
The combination of two pure genotypes of the same type is not modeled.
This can be interpreted as neglecting combinations that do not change the proportions of genotypes in the population.
This affects as well the combination $q_2$ of two genotypes, since the factor $-1/2$ is the difference between the actual factor $1/2$ in system \eqref{eq:system3-original} and the factor 1 symbolizing the preservation of genotype $q_2$.

The change from system \eqref{eq:system3-original} to the modified system \eqref{eq:system3-modified} is a change in the point of view:
While the first model describes the random assortment of genotypes from the point of view of the parental generation, the new model focuses on the change of proportions.
In our case, only those combinations are included in the model that change the proportions of the genotypes in the population.
Leveling effects, like the inheritance of genotype $q_1$ with probability $1/2$ if $q_1$ and $q_2$ are combined and therefore $1/2$ of the parental genes are identical to the child genes, are not modeled.

A similar interpretation is possible for the 2-component model.
As the modified system \eqref{eq:system2-modified} only regards the change of the proportions, the pure conservation of one proportion is not regarded.

Interpretations like this are not automatically possible if a model with a second integral is turned into a model with a first integral.
In any case, the new formulation might reveal new perspectives on the application and the most relevant mechanisms in the mathematical description.

\section{Conclusion --- or what can we learn?\texorpdfstring{\nopunct}{}}

The example we discussed in this article shows how surprising numerical simulation results
may lead to insightful analysis and reformulation of mathematical models.
The reformulation gives insight into properties of the modeled application on two levels. 
First, the reformulation process itself sheds light on conservation properties of the model. 
Second, the new formulation may change the view on the model leading to new interpretations of the mechanisms. 
Gaining insight into underlying principles is complicated for large models. 
The proposed reformulation workflow may support the finding of hidden principles while improving the analytical and numerical properties.

Please note that we neither blame the numerics nor the modeling; it is
their combination that lead to the catastrophic behavior we discussed.
If it was affordable to use exact representations of all numbers occurring
in the numerical time integration process, we could just use the original
model. However, this is only possible for small problems and short time
scales, e.g., by using rational numbers with high-precision integer types
such as \texttt{Rational\{BigInt\}} in Julia.
However, it appears to be preferable for us to adapt the modeling process,
thereby avoiding the unrealistic instabilities inherent in the original model. This is
required for most practical purposes, e.g., if uncertainties are present
or larger-scale simulations are used.

To recap, a surprising behavior of numerical methods applied to an innocently
looking model from mathematical biology for studying genetic drift guided us on a tour through several
undergraduate and graduate courses including mathematical modeling, dynamical
systems, and numerical analysis. In particular, we had a chance to see the
importance of invariants and their manifestation in first and second integrals
as well as stability properties of steady states.

We would like to see this article as a bridge gathering results and techniques
from several basic courses together while motivating advanced
courses in applied mathematics. This includes for example deeper concepts
of stability analysis, the study of asymptotic behavior of dynamical systems \cite{perko_differential_2001, seydel_practical_2010},
and advanced results such as the center manifold theorem
\cite{iooss1998topics,marsden1976hopf} or Fenichel's theory on invariant manifolds \cite{fenichel_persistence_1971} on the theoretical side.
The latter emphasizes using transformation to local coordinates on the manifold. 
While this approach has many analytical benefits, local coordinates are oftentimes difficult to interpret in the context of applications. 

From the point of view of a numerical analyst, it stresses the importance
of structure-preserving numerical methods, introduced in textbooks
such as \cite{sanzserna1994numerical,hairer2006geometric} in the context
of numerical methods for ODEs; the corresponding partial differential equation topics appear to be
more specialized and available mostly in form of journal articles such as
\cite{tadmor2003entropy,egger2019structure,ranocha2021broad,ranocha2020relaxation}.
The basic reason for us to modify the mathematical models was to
stabilize the dynamics when the total sum is not equal to unity.
This is related to invariant-preserving numerical methods such as
the orthogonal projection approach described in
\cite[Section~IV.4]{hairer2006geometric}. However, most numerical
analysts would probably not consider using such tools here
since we only need to preserve a linear functional --- which the
standard time integration methods we employ do automatically
\cite{tapley2021preservation} --- at least in exact arithmetic.
Thus, the behavior is somewhat surprising but of course well 
understood after performing a stability analysis. In some sense,
we have to deal with non-ideal versions in practice while the common
analysis assumes ideal numerical methods. This happens also in many
other cases in practice, e.g., if iterative numerical methods are
used to solve equations arising in implicit time integration schemes
\cite{birken2022conservation,linders2022locally,linders2023resolving}.

We introduced the problem of invariants and manifolds in the context of mathematical biology. 
Of course, problems of this type occur in many applications, for example in control problems \cite{wehage_generalized_1982} or mechanical systems \cite{hairer2006geometric}. 
Finally, we encourage researchers in the field of mathematical modeling and applied sciences to discover the strength of mathematical theory, especially for dynamical systems and for structure-preserving numerical schemes, for gaining insight into the dynamics of interest. 

To make this study reproducible and useful for classroom teaching, we provide
all source code and instructions available online in our repository
\cite{reisch2023modelingRepro} --- in the form of an interactive notebook
using the modern programming language Julia.

\section*{Acknowledgments}

We would like to thank Dirk Langemann
for discussions about this topic and constructive 
comments on an early draft of the manuscript.

HR was supported by the Deutsche Forschungsgemeinschaft
(DFG, German Research Foundation, project number 513301895)
and the Daimler und Benz Stiftung (Daimler and Benz foundation,
project number 32-10/22).

\printbibliography

\end{document}